\theoremstyle{definition}
\newtheorem{definition}{Definition}
\theoremstyle{plain}
\newtheorem{theorem}{Theorem}
\newtheorem{observation}{Observation}
\newtheorem{example}{Example}
\newtheorem{corollary}{Corollary}
\newcommand{\nl}{\textrm{Null}}
\newcommand{\cl}{\textrm{Col}}
\newcommand{\rank}{\textrm{rank}}
\tikzset{
    node style/.style={circle, draw, minimum size=8mm, inner sep=0pt},
    blacknode/.style={node style, fill=black!70, text=white},
    whitenode/.style={node style, fill=white, text=black},
}
\begin{document}
\title{Counting Cholesky factorizations of the zero matrix over $\mathbb{F}_2$}
\author{Hays Whitlatch and Joshua Cooper}
\maketitle
\begin{abstract}

A square, upper-triangular matrix $U$ is a Cholesky root of a matrix $M$ provided $U^*U=M$, where $\cdot^*$ represents the conjugate transpose when working over the complex field and $U^*=U^T$ over the reals and finite fields.  In this paper, we investigate the number of such factorizations over the finite field with two elements, $\mathbb{F}_2$, and prove the equinumerosity, for each fixed rank, of the Cholesky roots of and the upper-triangular square roots of the zero matrix.  We then provide asymptotics for this count and finish with a few directions for future inquiry.
\end{abstract}

\section{Introduction}

Cholesky factorization is a fundamental tool in linear algebra.  A Hermitian, positive semidefinite matrix $M$  can be decomposed into the product of a lower triangular matrix $L$ and its conjugate transpose $L^*$.  This factorization is known for its numerical stability, allows for efficiently solving linear systems, and is applied in Monte Carlo simulations.

Over finite fields, Cholesky factorizations are unlike the classical setting.  Positivity looks very different (see, for example, \cite{CHW24,guillot2025positivity,vishwakarma2025choleskydecompositionsymmetricmatrices}),  factorizations are generally not unique,  and the existence of a factorization can encode combinatorial properties.  In particular, over $\mathbb{F}_2$, Cholesky factorizations have a surprising connection to graph theory and bioinformatics: the success of a graph theoretical operation called \emph{pressing} is equivalent to the existence of a Cholesky factorization (\cite{cooper2016successful}).  Pressing sequences are close relatives of linear extensions of posets, and their enumeration -- approximately and exactly -- has been a source of interesting questions in the literature (for example, \cite{bixby2015proving, cooper2019uniquely}).  In turn, pressing sequences model sorting by reversal of DNA sequences (\cite{hannenhalli1999transforming, pevznerbook}).  As such, understanding their multiplicities is both a natural algebraic question and one with consequences in other fields.  

In this work, we focus on the simplest but most instructive case: the square zero matrix over $\mathbb{F}_2$.  While the zero matrix admits a unique Cholesky factorization over the real and complex fields, over $\mathbb{F}_2$ and other finite fields, it admits many distinct factorizations. In this paper, we give exact counts, by dimension and rank, of the Cholesky factorizations of the zero matrix over $\mathbb{F}_2$, we 
%prove a rank-preserving bijection between Cholesky \hays{we don't actually prove a bijection, just that they are equinumerous} ``roots'' of the zero matrix and upper-triangular square roots of the zero matrix, 
prove the equinumerosity of Cholesky ``roots'' of the zero matrix with a given rank $r$ and upper-triangular square roots of the zero matrix of rank $r$, 
and we analyze these counts via a combinatorial sum appearing in \cite{ekhad1996number} and provide asymptotics for its growth.  Our main results thus can be interpreted as a new combinatorial interpretation of the sequence \cite[A008964]{oeis}.

To motivate our results, we first explore the graph theoretic operation of pressing.

\begin{definition}
A \textbf{bicolored graph} $G = (G,c)$ is a simple graph $G$ with $c: V(G) \rightarrow \{\text{blue}, \text{white}\}$ which assigns a color to each vertex.\footnote{Some authors use black and white instead.}  The complement of blue is white and the complement of white is blue. For 
$v \in V(G)$,  \textbf{pressing} a blue vertex $v$ is the operation of transforming $(G,c)$ to $(G',c')$, a new bicolored graph in which $G[N_G(v) \cup \{v\}]$ is complemented.  That is, $V(G) = V(G')$ and $$E(G')= E(G) \triangle \begin{pmatrix}
N_G(v) \cup \{v\} \\
2
\end{pmatrix}$$
where $\triangle$ denotes symmetric difference, $N_G(v)$ is the neighborhood of $v$ in $G$, $c'(w)$ is the complement of $c(w)$ for $w \in N_G(v) \cup \{v\}$, and $c'(w) = c(w)$ otherwise.  This definition is illustrated in Example \ref{pressingexample} where one can see that pressing a blue vertex changes its color to white and flips the colors of its neighbors, isolates the pressed vertex, and complements the set of edges induced by its neighbors.
\end{definition}

A sequence of presses is referred to as a \textbf{pressing sequence}.  Since a pressed vertex becomes isolated, it may not be pressed again, and will not be affected by future presses.  Thus, every pressing sequence is finite in length.  If the end result of a pressing sequence is the empty, edgeless, colorless graph then we say that it was a \textbf{successful pressing sequence}.  As it turns out, a pressing sequence is successful exactly when the graph's adjacency matrix (with row and column order dictated by the pressing sequence) has a Cholesky factorization over $\mathbb{F}_2$.  In fact, the Cholesky factorization gives the ``instructions'' for pressing, revealing at each press which vertices would be affected, as we now explain.

\begin{example}\label{pressingexample}
Below is a bicolored graph with successful pressing sequence 1,2,3 (blue vertices are shown in black). 

\begin{center}

\begin{tikzpicture}[x=1.4cm, y=1cm] % Removed 'every node/.style'

%First Graph
\node[blacknode] (a1) at (0,4) {1};
\node[whitenode] (a2) at (1,3) {2};
\node[blacknode] (a3) at (0,2) {3};
\node[whitenode] (a4) at (1,1) {4};
\node[blacknode] (a5) at (0,0) {5};

\draw (a1) -- (a2);
\draw (a1) -- (a3);
\draw (a2) -- (a4);
\draw (a3) -- (a5);
\draw (a4) -- (a5);

%Enclose Graph 1
\draw (-.5,-1) -- (-.5,5) -- (1.5,5) -- (1.5,-1) -- cycle;

%Press 1
\node at (2,2.5) {\textrm{Press 1}};
\node at (2,2) {\textrm{$\longrightarrow$}};

%Second Graph
\node[whitenode] (b1) at (3,4) {1};
\node[blacknode] (b2) at (4,3) {2};
\node[whitenode] (b3) at (3,2) {3};
\node[whitenode] (b4) at (4,1) {4};
\node[blacknode] (b5) at (3,0) {5};

\draw (b2) -- (b3);
\draw (b2) -- (b4);
\draw (b3) -- (b5);
\draw (b4) -- (b5);

%Enclose Graph 2
\draw (2.5,-1) -- (2.5,5) -- (4.5,5) -- (4.5,-1) -- cycle;

%Press 2
\node at (5,2.5) {\textrm{Press 2}};
\node at (5,2) {\textrm{$\longrightarrow$}};

%Third Graph
\node[whitenode] (c1) at (6,4) {1};
\node[whitenode] (c2) at (7,3) {2};
\node[blacknode] (c3) at (6,2) {3};
\node[blacknode] (c4) at (7,1) {4};
\node[blacknode] (c5) at (6,0) {5};

\draw (c3) -- (c4);
\draw (c3) -- (c5);
\draw (c4) -- (c5);

%Enclose Graph 3
\draw (5.5,-1) -- (5.5,5) -- (7.5,5) -- (7.5,-1) -- cycle;

%Press 3
\node at (8,2.5) {\textrm{Press 3}};
\node at (8,2) {\textrm{$\longrightarrow$}};

%Fourth Graph
\node[whitenode] (d1) at (9,4) {1};
\node[whitenode] (d2) at (10,3) {2};
\node[whitenode] (d3) at (9,2) {3};
\node[whitenode] (d4) at (10,1) {4};
\node[whitenode] (d5) at (9,0) {5};

%Enclose Graph 4
\draw (8.5,-1) -- (8.5,5) -- (10.5,5) -- (10.5,-1) -- cycle;

\end{tikzpicture}
\end{center}

The adjacency matrix of the graph in Example \ref{pressingexample} (with 1's on the diagonal when the vertex is blue) and the Cholesky factorization corresponding to this pressing sequence (with rows and columns labelled by 1, 2, 3, 4, 5) is 
$$M=\left[\begin{array}{ccccc}
     1 & 1 & 1 & 0 & 0 \\
     1 & 0 & 0 & 1 & 0 \\
     1 & 0 & 1 & 0 & 1 \\
     0 & 1 & 0 & 0 & 1 \\
     0 & 0 & 1 & 1 & 1 \\
\end{array}\right]=\left[\begin{array}{ccccc}
     1 & 1 & 1 & 0 & 0 \\
     0 & 1 & 1 & 1 & 0 \\
     0 & 0 & 1 & 1 & 1 \\
     0 & 0 & 0 & 0 & 0 \\
     0 & 0 & 0 & 0 & 0 \\
\end{array}\right]^T\left[\begin{array}{ccccc}
     1 & 1 & 1 & 0 & 0 \\
     0 & 1 & 1 & 1 & 0 \\
     0 & 0 & 1 & 1 & 1 \\
     0 & 0 & 0 & 0 & 0 \\
     0 & 0 & 0 & 0 & 0 \\
\end{array}\right]=U^TU$$

\end{example}
Observe that in the upper-diagonal matrix $U$, the $j^\textrm{th}$ entry of the $i^\textrm{th}$ row is $1$ exactly when pressing vertex $i$ changed the state of vertex $j$, and in particular is $0$ if $i > \rank(U)$.  For this reason we call $U$ the \emph{instructional} Cholesky factorization of $M$.  This instructional  Cholesky factorization of $M$ is unique and, in \cite{cooper2020new}, the number of instructional  Cholesky factorizations of $\{PMP^T\mid P\textrm{ a permutation matrix}\}$ is discussed.  However, over $\mathbb{F}_2$ a Cholesky factorization of a matrix is not unique in general (only when it is of full rank).  In fact, the matrix in our previous example offers a second Cholesky factorization: $$M=\left[\begin{array}{ccccc}
     1 & 1 & 1 & 0 & 0 \\
     1 & 0 & 0 & 1 & 0 \\
     1 & 0 & 1 & 0 & 1 \\
     0 & 1 & 0 & 0 & 1 \\
     0 & 0 & 1 & 1 & 1 \\
\end{array}\right]=\left[\begin{array}{ccccc}
     1 & 1 & 1 & 0 & 0 \\
     0 & 1 & 1 & 1 & 0 \\
     0 & 0 & 1 & 1 & 1 \\
     0 & 0 & 0 & 0 & 1 \\
     0 & 0 & 0 & 0 & 1 \\
\end{array}\right]^T\left[\begin{array}{ccccc}
     1 & 1 & 1 & 0 & 0 \\
     0 & 1 & 1 & 1 & 0 \\
     0 & 0 & 1 & 1 & 1 \\
     0 & 0 & 0 & 0 & 1 \\
     0 & 0 & 0 & 0 & 1 \\
\end{array}\right]=V^TV$$
We obtain $V$ from $U$ by replacing the $2\times 2$ trailing submatrix $\left[\begin{array}{cc}
     0 & 0 \\
     0 & 0 \\
\end{array}\right]$ with $\left[\begin{array}{cc}
     0 & 1 \\
     0 & 1 \\
\end{array}\right]$.  This works since the latter is also a Cholesky root of the $2\times 2$ zero matrix -- and this construction generalizes, as shown in the next section with Corollary \ref{cor:factorization}.  Say that a graph with vertex set $[n]$ is ``pressable'' if $(1,2,\ldots,k)$ is a successful pressing sequence for some $k \leq n$.  In general, 
%when an adjacency matrix arises from a pressable graph, the number of Cholesky roots of a symmetric matrix over $\mathbb{F}_2$ is equal to the number of Cholesky roots of the all-zeroes matrix\hays{is this true?  Do we mean:  when an adjacency matrix arises from a pressable graph OF RANK r, the number of Cholesky roots of a symmetric matrix over $\mathbb{F}_2$ is equal to the number of Cholesky roots of the all-zeroes matrix OF SIZE n-r?}.  
when an adjacency matrix of rank $r$ arises from a pressable graph, the number of Cholesky roots of a symmetric matrix over $\mathbb{F}_2$ is equal to the number of Cholesky roots of the all-zeroes square matrix of dimension $n-r$.
There are typically many such roots, because $\approx 58\%$ of symmetric matrices over $\mathbb{F}_2$ are singular (see, for example, \cite{macwilliams69}).

\section{Cholesky Roots over $\mathbb{F}_2$}

In this section we will discuss Cholesky factorizations over $\mathbb{F}_2$.  Our goal is 
%to establish a bijection\hays{again, should we avoid saying ``bijection''?} between the number of distinct Cholesky factorizations of the zero matrix and the number of square roots of the zero matrix.  
to show that the number of distinct Cholesky factorizations of the zero matrix and the number of square roots of the zero matrix are equal.  
Accordingly, in Definition \ref{roots}, we will formally refer to the upper triangular matrix in these factorizations -- typically denoted by $U$ -- as a root (although, for convenience, we will often work with its transpose $L=U^T$) . Before proceeding, we briefly review the behavior of Cholesky roots in the complex setting and contrast this with their behavior over finite fields such as $\mathbb{F}_2$.

Let $n\geq 1$ and let $O^\mathbb{C}_n, I^\mathbb{C}_n$  denote the additive and multiplicative  identity matrices in the space of $n\times n$ complex matrices, respectively.  Suppose $LL^*$ is a Cholesky factorization for $O^\mathbb{C}_n$, then  for all $1\leq i,j\leq n$ the dot product of the $i^\textrm{th}$ row of $L$ and the $j^{\textrm{th}}$ column of $L^*$ must be zero.  However, since the $j^{\textrm{th}}$ column of $L^*$ is simply the complex-conjugate of  the $j^{\textrm{th}}$ row of $L$, we have 
$$
\sum_{k=1}^{n}L[i,k] \overline{L[j,k]} =0 \quad \textrm{ for all }1\leq i,j\leq n .
$$
So by considering $i=j$, we may conclude that $L=O^\mathbb{C}_n$, from which it follows that $O^\mathbb{C}_n$ has a unique Cholesky factorization.  We now return to the finite field with two elements, $\mathbb{F}_2$.

\begin{definition}\label{roots}
Let $M$ be a $n\times n$ symmetric matrix with entries in $\mathbb{F}_2$.  Over this field -- and any finite field -- we say $M$ has a \emph{Cholesky factorization} if there exists a lower-triangular matrix $L$ such that $LL^T=M$ or equivalently if there exists an upper-triangular matrix $U$ such that $U^TU=M$.  In such case, we call $U$ a \emph{Cholesky root of $M$ }.
\end{definition}

For all positive integer $n$, let $O_n$ and $I_n$ denote the additive and multiplicative identity matrices in the space of $n\times n$ matrices over $\mathbb{F}_2$, respectively.  For $r \leq n$, we let $\mathcal{U}_n(r)$ be the set of $n\times n$, rank $r$, upper-triangular matrices with entries from  $\mathbb{F}_2$. For $n\geq 1$ and $r\leq n$ we define

$$\mathcal{A}_n(r) = \{U\in \mathcal{U}_n(r)\mid U^2=I_n \}\quad \textrm{and}\quad \mathcal{A}_n=\bigcup_{0\leq r\leq n} \mathcal{A}_n(r);$$

$$\mathcal{B}_n(r) = \{U\in \mathcal{U}_n(r)\mid U^2=O_n\}  \quad \textrm{and}\quad
\mathcal{B}_n=\bigcup_{0\leq r\leq n} \mathcal{B}_n(r);$$

$$\mathcal{C}_n(r) = \{U\in \mathcal{U}_n(r)\mid U^TU=O_n\}\quad \textrm{and}\quad \mathcal{C}_n=\bigcup_{0\leq r\leq n} \mathcal{C}_n(r)$$

\begin{observation}\label{obs:AandB}
For all $n\geq 1$: $$|\mathcal{A}_n |=|\mathcal{B}_n|$$
\end{observation}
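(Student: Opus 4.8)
The plan is to show that both $\mathcal{A}_n$ and $\mathcal{B}_n$ are parametrized by the same underlying set -- the strictly upper-triangular matrices $N$ over $\mathbb{F}_2$ satisfying $N^2 = O_n$ -- and that the correspondence between them is simply the affine shift $N \mapsto I_n + N$.

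First I would pin down the diagonals. If $U$ is upper-triangular, then for each $i$ the $(i,i)$-entry of $U^2$ is $\sum_k U[i,k]U[k,i]$, and a summand can be nonzero only when $k \geq i$ (forced by $U[i,k]$) and $k \leq i$ (forced by $U[k,i]$); hence $(U^2)[i,i] = U[i,i]^2$. Consequently every $U \in \mathcal{A}_n$ has all diagonal entries equal to $1$, and every $U \in \mathcal{B}_n$ has all diagonal entries equal to $0$. Thus $\mathcal{B}_n$ is exactly the set of strictly upper-triangular $N$ with $N^2 = O_n$, while every $U \in \mathcal{A}_n$ can be written uniquely as $U = I_n + N$ with $N$ strictly upper-triangular.

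Next I would exploit characteristic $2$ to simplify the square of $I_n + N$. Since $2N = O_n$ over $\mathbb{F}_2$, the cross terms cancel:
$$(I_n + N)^2 = I_n + N + N + N^2 = I_n + N^2,$$
so $U = I_n + N$ satisfies $U^2 = I_n$ if and only if $N^2 = O_n$. Combining this with the previous paragraph, the map $\phi(N) = I_n + N$ is a well-defined bijection from $\mathcal{B}_n$ (viewed as the set of strictly upper-triangular $N$ with $N^2 = O_n$) onto $\mathcal{A}_n$, with inverse $U \mapsto U + I_n$; this yields $|\mathcal{A}_n| = |\mathcal{B}_n|$.

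There is no serious obstacle here: the only steps needing a moment's care are the diagonal computation for products of upper-triangular matrices and the characteristic-$2$ cancellation. It is worth remarking that, because $I_n + N$ is always invertible (its inverse being $I_n + N + \cdots + N^{n-1}$), the set $\mathcal{A}_n$ in fact coincides with $\mathcal{A}_n(n)$; so, unlike Observation~\ref{obs:AandB} itself, the rank-graded analogue $|\mathcal{A}_n(r)| = |\mathcal{B}_n(r)|$ fails, which is why the statement is phrased only for the ungraded families.
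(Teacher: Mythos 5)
Your proof is correct and is essentially the paper's argument: the affine shift $U \mapsto U + I_n$ together with the characteristic-two identity $(I_n+N)^2 = I_n + N^2$ gives the bijection between $\mathcal{B}_n$ and $\mathcal{A}_n$. Your additional observations (the diagonal computation pinning down membership in each set, and the remark that the rank-graded analogue fails because every element of $\mathcal{A}_n$ is invertible) are accurate but not needed beyond what the paper records.
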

\begin{proof}
Observe that $(X+I_n)^2=X^2+XI_n+I_nX+I^2_n=X^2+I_n$.  Hence, for all $X\in U_n$,  $X^2=O_n$ if and only if $(X+I_n)^2=I_n$.  
\end{proof}

\begin{theorem}\label{thm:BandC}
For all $n\geq 1$ and $r\leq n$: $$|\mathcal{B}_n(r)|=|\mathcal{C}_n(r)| $$
\end{theorem}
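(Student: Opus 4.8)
The plan is to construct an explicit bijection between $\mathcal{B}_n(r)$ and $\mathcal{C}_n(r)$. The two conditions $U^2 = O_n$ and $U^T U = O_n$ differ only in that the first uses $U$ as the left factor while the second uses $U^T$; so the natural idea is to try to pass from one to the other by manipulating only the strictly-upper-triangular part of $U$ while keeping the diagonal fixed (since $U$ upper-triangular forces the diagonal entries to be idempotents, i.e.\ arbitrary in $\mathbb{F}_2$, and the rank constraint will have to be tracked through the map). First I would write $U = D + N$ where $D$ is the diagonal part and $N$ is strictly upper-triangular, and expand both defining equations: $U^2 = D^2 + DN + ND + N^2 = D + DN + ND + N^2$, while $U^T U = (D+N^T)(D+N) = D + DN + N^T D + N^T N$. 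The goal is to find an involution or a bijection on the set of strictly-upper-triangular $N$ (for each fixed $D$, or after a suitable reshuffling of $D$) carrying solutions of one system to solutions of the other, and to check it preserves rank.

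The key structural observation I would try to exploit is that an upper-triangular $U$ with $U^2 = O_n$ or $U^TU = O_n$ is highly constrained: in either case $\rank(U) \le n/2$, and more precisely the column space (resp.\ row space) must be contained in the null space. For $U^T U = O_n$ this says $\cl(U) \subseteq \nl(U)$, i.e.\ the rows of $U$ are orthogonal to the columns; for $U^2 = O_n$ it says $\cl(U) \subseteq \nl(U)$ with $U$ acting on the left. I would try to put $U$ into a canonical form using the pivot structure of an upper-triangular matrix: the rank-$r$ pivots occur in some set of $r$ rows/columns, and one can read off $U$ from its pivot columns together with the coefficients expressing non-pivot columns in terms of pivot ones. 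The hope is that the constraint $U^2 = O_n$ versus $U^TU = O_n$ translates into two descriptions of the "same" combinatorial data (a choice of $r$ pivot positions compatible with upper-triangularity, plus a free matrix of size roughly $r \times (n - \text{something})$), with the count depending only on $n$ and $r$; matching the two parametrizations then gives the bijection.

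The main obstacle I expect is handling the diagonal entries and the rank bookkeeping simultaneously. The map $X \mapsto X + I_n$ used in Observation~\ref{obs:AandB} does not preserve rank, so I cannot simply reuse it; I need a rank-preserving correspondence, which is more delicate because adding anything to the diagonal can change the rank in ways that depend on the off-diagonal structure. A promising route is to first prove a normal-form lemma: every $U \in \mathcal{B}_n(r)$ is equivalent, via conjugation by an upper-triangular permutation-like matrix (or by a careful block decomposition along the pivot rows), to a block matrix whose nonzero content is an $r \times (n-r)$ block $W$ satisfying a single linear condition, and likewise for $\mathcal{C}_n(r)$ with a possibly different but equinumerous condition on $W$; then count or biject at the level of $W$. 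Alternatively, if a clean bijection resists, I would fall back on showing both $|\mathcal{B}_n(r)|$ and $|\mathcal{C}_n(r)|$ equal the same explicit sum (e.g.\ via a recursion on $n$ obtained by deleting the last row and column, tracking whether the last index is a pivot), and verify the recursions coincide. Either way, the crux is the linear-algebra normal form that simultaneously controls nilpotency/orthogonality and rank over $\mathbb{F}_2$.
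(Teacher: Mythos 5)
Your proposal is a plan with two branches, and only the branch you relegate to a single fallback sentence --- ``show both counts satisfy the same recursion by deleting the last row and column'' --- is actually the paper's proof. The entire content of the theorem lives in carrying that verification out, and your proposal leaves it undone. Concretely, writing $B = \left[\begin{smallmatrix} B' & \mathbf{v} \\ \mathbf{0}^T & b \end{smallmatrix}\right]$, the condition $B^2 = O_n$ forces $b = 0$, $B'\mathbf{v} = \mathbf{0}$, and $B'^2 = O_n$; writing $C = \left[\begin{smallmatrix} C' & \mathbf{w} \\ \mathbf{0}^T & c \end{smallmatrix}\right]$, the condition $C^TC = O_n$ forces $C'^T\mathbf{w} = \mathbf{0}$, $C'^TC' = O_n$, and $c = \mathbf{w}^T\mathbf{w}$. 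The subtle point your sketch does not touch is that the diagonal bookkeeping differs between the two cases ($b$ is forced to be $0$; $c$ is determined by $\mathbf{w}$ and need not be $0$ a priori) yet both contribute exactly one extension per admissible column, and that the rank split matches: in both cases $\cl \subseteq \nl$ of the relevant operator, the extensions preserving rank are exactly those with $\mathbf{v}$ (resp.\ $\mathbf{w}$) in the column space ($2^r$ choices), and those raising the rank number $2^{n-r} - 2^{r-1}$ --- for the $\mathcal{C}$ case one must additionally observe that $\mathbf{w} \in \cl(C') \subseteq \nl(C'^T)$ forces $c = \mathbf{w}^T\mathbf{w} = 0$, which is what keeps the rank from jumping. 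Without these checks the claim ``the recursions coincide'' is an assertion, not an argument.

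Your primary route --- a rank-preserving explicit bijection via a normal form --- is not merely delicate: the paper explicitly lists finding a natural bijection between $\mathcal{B}_n(r)$ and $\mathcal{C}_n(r)$ as an open problem in its concluding section. You correctly anticipate that the $X \mapsto X + I_n$ trick fails here because it destroys the rank stratification, but the proposed repair (canonical forms under upper-triangular conjugation reducing everything to an $r \times (n-r)$ block) is speculative and would need substantial work to even state precisely. I would recommend abandoning that branch and promoting your fallback to the main argument, fleshed out with the casework above.
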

\begin{proof}
Observe that
$$
\mathcal{B}_1=\mathcal{B}_1(0)=\left\{\begin{bmatrix}
0
\end{bmatrix}\right\}=\mathcal{C}_1(0)=\mathcal{C}_1.
$$ 
We proceed by induction.  Let $n>1$ and assume that
$\vert\mathcal{B}_{n-1}(r)\vert=\vert\mathcal{C}_{n-1}(r)\vert$ for all $r\leq n-1$. 
Choose and fix a rank $r$, $n\times n$ upper-triangular matrix $B$.  Observe that by Sylvester's rank inequality $\mathcal{B}_n(n)=\mathcal{C}_n(n)=\emptyset$, so we may proceed with the assumption that $r<n$.  Let $B'$ be the $(n-1) \times (n-1)$ principal submatrix of $B$.  Note that
$$
B^2=  \left[ \begin{array}{c|c}
B' & \mathbf{v}  \\ \hline
\mathbf{0}^T & b  \\
\end{array} \right]\left[ \begin{array}{c|c}
B' & \mathbf{v}  \\ \hline
\mathbf{0}^T & b  \\
\end{array} \right]=\left[ \begin{array}{c|c}
B'^2 & B'\mathbf{v} + b\mathbf{v}  \\ \hline
\mathbf{0}^T & b^2  \\
\end{array} \right].$$   Then $B \in \mathcal{B}_{n}$ if and only if $b=0$ and $B'\mathbf{v}=\mathbf{0}$ and
$B'\in \mathcal{B}_{n-1}$.  However $B'\mathbf{v}=\mathbf{0}$ if and only if $\mathbf{v}\in \nl(B')$, the null space of $B'$. If  $B'\in \mathcal{B}_{n-1}$ then the column space of $B'$, $\cl(B')$, must be a subset of $\nl(B')$. It follows that if $B\in \mathcal{B}_n$ then $\mathbf{v}\in \cl(B')$ or $\mathbf{v}\in \nl(B')\setminus \cl(B')$.  Hence, for each $r$:   
\begin{align*}
|\mathcal{B}_{n}(r)| &= |\mathcal{B}_{n-1}(r)|\cdot 2^{r} + |\mathcal{B}_{n-1}(r-1)|\cdot \left(2^{\dim(\nl(B'))}-2^{r-1}\right) \\
& = | \mathcal{B}_{n-1}(r) | \cdot 2^{r} + | \mathcal{B}_{n-1} (r-1)| \cdot\left(2^{n-r}-2^{r-1}\right)
\end{align*}

Choose and fix a rank $r$, $n\times n$ upper-triangular matrix $C$.  Let $C'$ be the $(n-1)\times (n-1)$ principal submatrix of $C$.  Then
$$
C^TC=  \left[ \begin{array}{c|c}
C'^T & \mathbf{0}  \\ \hline
\mathbf{w}^T & c  \\
\end{array} \right]\left[ \begin{array}{c|c}
C' & \mathbf{w}  \\ \hline
\mathbf{0}^T & c  \\
\end{array} \right]=\left[ \begin{array}{c|c}
C'^TC' & C'^T\mathbf{w}   \\ \hline
\mathbf{w}^TC' & \mathbf{w}^T\mathbf{w}+c^2  \\
\end{array} \right].
$$   
Thus, $C \in \mathcal{C}_{n}$ if and only if $\mathbf{w}^T\mathbf{w}+c^2=0$ and  $C'^T \mathbf{w}=\mathbf{0} $ and
$C'\in \mathcal{C}_{n-1}$.  In particular, $c = \mathbf{w}^T \mathbf{w}$ is determined by the choice of $\mathbf{w}$.  Furthermore, $\mathbf{w} \in \nl(C'^T)$, and $\rank(C') \leq \rank(C) \leq \rank(C')+1$ because $C$ is upper triangular.  If $\mathbf{w} \in \cl(C')$, then $\rank(C)=\rank(C')$ because $c = \mathbf{w}^T \mathbf{w} = 0$, since $C'^T C' = 0$ and $\mathbf{w} \in \cl(C') \subseteq \nl(C'^T)$.  On the other hand, if $\mathbf{w} \not \in \cl(C')$, then $\rank(C) > \rank(C')$, so $\rank(C')=\rank(C)-1$.  Thus, it follows that for each $r$:   
\begin{align*}
| \mathcal{C}_{n}(r) | &= |\mathcal{C}_{n-1}(r)| \cdot 2^{r} + | \mathcal{C}_{n-1}(r-1)|\cdot \left(2^{\dim(\nl(C'^T))}-2^{r-1}\right) \\
& = | \mathcal{C}_{n-1}(r) | \cdot 2^{r} + | \mathcal{C}_{n-1}(r-1) | \cdot \left(2^{n-r}-2^{r-1}\right) .
\end{align*}
Therefore, $|B_n(r)|$ and $|C_n(r)|$ satisfy the same recurrence relation with the same initial condition, so are equal.

\end{proof}

In \cite{ekhad1996number} the authors give a count for the number of upper-triangular matrices over $\mathbb{F}_q$ whose square is the zero matrix. Here, we restrict their result to $q=2$.
\begin{theorem}(Theorem 1 in \cite{ekhad1996number})\label{Main Theorem}
\begin{eqnarray}
| \mathcal{B}_{2n} | &=&
\sum\limits_{j}\left[ \binom{2n}{n-3j}-\binom{2n}{n-3j-1}\right]2^{n^2-3j^2-j}\nonumber \\
| \mathcal{B}_{2n+1} | &=&\sum\limits_{j}\left[ \binom{2n+1}{n-3j}-\binom{2n+1}{n-3j-1}\right]2^{n^2+n-3j^2-2j}\nonumber
\end{eqnarray}
\end{theorem}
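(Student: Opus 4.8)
The plan is to establish the two recurrences stated in Theorem \ref{Main Theorem} by finding a closed form for the sequence $|\mathcal{B}_n|$ (equivalently $|\mathcal{C}_n|$) via the rank-refined recurrence already derived in the proof of Theorem \ref{thm:BandC}. Since that theorem is quoted directly from \cite{ekhad1996number}, the honest approach is to cite it and then sanity-check that the formula is consistent with the recurrence we have in hand; but to give a self-contained argument, I would proceed as follows.

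First I would record the two-variable recurrence
\begin{align*}
|\mathcal{B}_n(r)| = |\mathcal{B}_{n-1}(r)|\cdot 2^r + |\mathcal{B}_{n-1}(r-1)|\cdot(2^{n-r}-2^{r-1})
\end{align*}
together with the boundary data $|\mathcal{B}_1(0)| = 1$, $|\mathcal{B}_n(r)| = 0$ for $r > n$ or $r < 0$, and sum over $r$ to get a recurrence for $|\mathcal{B}_n| = \sum_r |\mathcal{B}_n(r)|$. Because the coefficient $2^{n-r} - 2^{r-1}$ mixes $2^{-r}$ and $2^r$ weights, the natural move is to introduce the generating polynomial $F_n(x) = \sum_r |\mathcal{B}_n(r)|\, x^r$ and translate the recurrence into an operator identity relating $F_n$ to $F_{n-1}$ (the $2^r$ and $2^{n-r}$ factors become substitutions $x \mapsto 2x$ and rescalings). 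Then $|\mathcal{B}_n| = F_n(1)$.

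Next I would guess the closed form suggested by the statement — a signed sum of differences of binomial coefficients $\binom{N}{n - 3j} - \binom{N}{n - 3j - 1}$ against powers of $2$ — and verify it satisfies the summed recurrence by induction on $n$. The binomial-difference pattern $\binom{N}{k} - \binom{N}{k-1}$ is the ballot/Catalan-type combination that is annihilated by a reflection, and the "$3j$" spacing signals a reflection group acting on a walk confined to a wedge; so the cleanest route is to interpret $|\mathcal{B}_n|$ as a weighted count of lattice paths and apply a reflection/cycle-lemma argument, with the weight $2^{n^2 - 3j^2 - j}$ (resp.\ $2^{n^2+n-3j^2-2j}$) emerging from telescoping the $2^r$-type factors along a path. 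Separating the cases $n$ even and $n$ odd just tracks the parity of where the walk's midpoint sits.

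The main obstacle will be matching the exponent of $2$ exactly: the recurrence produces, along a path through rank values $r_1, r_2, \ldots$, a product of terms each of which is either $2^{r}$ or a \emph{difference} $2^{n-r} - 2^{r-1}$, and collapsing that difference-of-powers structure into a single clean quadratic exponent $n^2 - 3j^2 - j$ requires care — one must show the cross terms cancel in aggregate, which is exactly where the binomial-difference (reflection) bookkeeping does its work. Given that Theorem \ref{Main Theorem} is an already-published result, I would ultimately present it as a direct quotation of \cite[Theorem 1]{ekhad1996number} specialized to $q = 2$, noting only that our recurrence for $|\mathcal{B}_n(r)|$ furnishes an independent derivation and an easy numerical check for small $n$.
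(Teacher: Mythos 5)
The paper offers no proof of this statement at all: it is imported verbatim (specialized to $q=2$) from \cite{ekhad1996number}, and your ultimate decision to present it as a direct quotation of that result is exactly what the paper does, so your proposal matches the paper's approach. Your sketched self-contained derivation --- pushing the rank-refined recurrence for $|\mathcal{B}_n(r)|$ through a generating-polynomial/reflection argument --- is a reasonable outline of how such a formula is actually proved, but as you yourself note the exponent-matching step is not carried out, so it should remain a remark rather than a claimed proof.
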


By Observation \ref{obs:AandB} and Theorem \ref{thm:BandC}, we obtain the following.

\begin{corollary}
For all $n>0$
$$
| \mathcal{A}_n | = | \mathcal{B}_n | = | \mathcal{C}_n | = \sum\limits_{j}\left[ \binom{n}{\lfloor \frac{n}{2}\rfloor-3j}-\binom{n}{\lfloor \frac{n}{2}\rfloor-3j-1}\right]2^{\lfloor \frac{n}{2}\rfloor\lceil \frac{n}{2}\rceil-3j^2-(\lceil \frac{n}{2}\rceil-\lfloor \frac{n}{2}\rfloor+1)j}$$
\end{corollary}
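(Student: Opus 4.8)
The plan is to assemble the claimed identity entirely from results already in hand, so that the only real work is a parity case check. By Observation~\ref{obs:AandB} we have $|\mathcal{A}_n| = |\mathcal{B}_n|$. Since $\mathcal{B}_n = \bigcup_{r} \mathcal{B}_n(r)$ and $\mathcal{C}_n = \bigcup_{r} \mathcal{C}_n(r)$ are disjoint unions (each matrix has a well-defined rank), summing the conclusion $|\mathcal{B}_n(r)| = |\mathcal{C}_n(r)|$ of Theorem~\ref{thm:BandC} over all $r \le n$ gives $|\mathcal{B}_n| = |\mathcal{C}_n|$. Hence all three quantities coincide, and it remains only to show that $|\mathcal{B}_n|$ equals the stated floor/ceiling sum; for this I would invoke Theorem~\ref{Main Theorem} and verify that its two parity-indexed formulas are both captured by the single unified expression.

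Concretely, I would split into the even and odd cases. If $n = 2m$, then $\lfloor n/2 \rfloor = \lceil n/2 \rceil = m$, so $\lfloor n/2\rfloor\lceil n/2\rceil = m^2$ and $\lceil n/2\rceil - \lfloor n/2\rfloor + 1 = 1$; the exponent of $2$ in the unified sum becomes $m^2 - 3j^2 - j$ and the binomial difference becomes $\binom{2m}{m - 3j} - \binom{2m}{m - 3j - 1}$, which is precisely the first line of Theorem~\ref{Main Theorem} after renaming its summation parameter $n$ to $m$. If $n = 2m+1$, then $\lfloor n/2\rfloor = m$ and $\lceil n/2\rceil = m+1$, so $\lfloor n/2\rfloor\lceil n/2\rceil = m(m+1) = m^2 + m$ and $\lceil n/2\rceil - \lfloor n/2\rfloor + 1 = 2$; the exponent becomes $m^2 + m - 3j^2 - 2j$ and the binomial difference becomes $\binom{2m+1}{m-3j} - \binom{2m+1}{m-3j-1}$, matching the second line of Theorem~\ref{Main Theorem}. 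Combining the two cases with the chain of equalities above yields the corollary.

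I do not expect any genuine obstacle here: the computations are routine, and the substantive content — the equinumerosity $|\mathcal{A}_n| = |\mathcal{B}_n| = |\mathcal{C}_n|$ — has already been established in Observation~\ref{obs:AandB} and Theorem~\ref{thm:BandC}. The only point deserving a moment's care is the bookkeeping in the exponent of $2$, specifically recognizing that the coefficient $\lceil n/2\rceil - \lfloor n/2\rfloor + 1$ of $j$ evaluates to $1$ when $n$ is even and to $2$ when $n$ is odd, which is exactly the parity-dependent discrepancy between the two displayed formulas of Theorem~\ref{Main Theorem}; everything else is a cosmetic repackaging of that known count.
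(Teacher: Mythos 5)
Your proposal is correct and matches the paper's (implicit) argument exactly: the paper derives the corollary by citing Observation~\ref{obs:AandB} and Theorem~\ref{thm:BandC} for the equinumerosity and then substituting the Ekhad--Zeilberger formula of Theorem~\ref{Main Theorem}, which is precisely your chain of equalities plus the parity check. Your verification that the unified floor/ceiling exponent reduces to $m^2-3j^2-j$ when $n=2m$ and to $m^2+m-3j^2-2j$ when $n=2m+1$ is accurate.
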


Given a $n\times n$ matrix $M$ with entries in $\mathbb{F}_2$, we let $M_k$ denote the  $k^{\textrm{th}}$ leading principal submatrix of $M$, $1\leq k\leq n$.  We say is $M$ is in leading principal non-singular (LPN) form if $$\det(M_k)=\begin{cases} 1, & \textrm{if } k\leq \textrm{ rank}(M)\\ 0,& \textrm{if  rank}(M)<k\leq n\\ \end{cases}$$
It was shown in \cite{cooper2016successful} that if $M$ is a full-rank, symmetric matrix with entries in $\mathbb{F}_2$ then $M=U^TU$ from some upper-triangular matrix $U$ if and only if $M$ is in LPN form.  Furthermore, this Cholesky factorization is unique.  In \cite{cooper2019uniquely} it was demonstrated that uniqueness fails when $M$ is not full-rank; however, if $M$ is in LPN form then, as discussed in the introduction, the upper-triangular instructional Cholesky root $U$ of the graph whose adjacency matrix is $M$ satisfies $M = U^T U$.  The following corollary then provides a count of all Cholesky roots of $M$.

\begin{corollary}\label{cor:factorization}
Let $A\in {\mathbb{F}_2}^{n\times n}$ of rank $r$ be in leading principal minors form.  The number of distinct Cholesky factorizations for $A$ is $|\mathcal{C}_n(n-r)|$.
\end{corollary}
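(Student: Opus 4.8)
The plan is to show that a Cholesky root of $A$ is completely determined on its first $r$ rows, while its trailing $(n-r)\times(n-r)$ principal block is free to be \emph{any} Cholesky root of the zero matrix $O_{n-r}$; the count then falls out of the resulting bijection. First I would block-decompose everything along the split $n=r+(n-r)$: write $A_{11}$ for the $r$th leading principal submatrix of $A$, $A_{22}$ for its trailing $(n-r)\times(n-r)$ block, and $A_{12}$ for the $r\times(n-r)$ off-diagonal block, so the lower-left block is $A_{12}^T$ by symmetry. Because $A$ is in LPN form and $\rank(A)=r$, its first $r$ leading principal minors all equal $1$, and these are exactly the leading principal minors of $A_{11}$; hence $A_{11}$ is invertible and is itself a full-rank symmetric matrix in LPN form. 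The uniqueness theorem of \cite{cooper2016successful} then gives $A_{11}$ a unique upper-triangular Cholesky root $W$, which is invertible since $A_{11}$ is. Finally, because $A_{11}$ is invertible and $\rank(A)=r$, block row reduction shows the Schur complement vanishes: $A_{22}=A_{12}^T A_{11}^{-1}A_{12}$.

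Next I would impose $U^TU=A$ on a generic $n\times n$ upper-triangular $U$ with the same block shape — diagonal blocks $U_{11}$ (size $r$) and $U_{22}$ (size $n-r$), off-diagonal block $U_{12}$ — and read off the four block identities. The $(1,1)$ identity $U_{11}^TU_{11}=A_{11}$ forces $U_{11}=W$ by uniqueness (as $U_{11}$ is upper triangular); the $(1,2)$ identity $W^TU_{12}=A_{12}$ forces $U_{12}=(W^T)^{-1}A_{12}$ since $W$ is invertible; the $(2,1)$ identity is the transpose of the $(1,2)$ identity and so is automatic. Substituting the forced $U_{12}$ into the $(2,2)$ identity and using $W^TW=A_{11}$ gives $U_{12}^TU_{12}=A_{12}^TA_{11}^{-1}A_{12}=A_{22}$ by the Schur-complement identity, so over $\mathbb{F}_2$ the remaining identity $U_{12}^TU_{12}+U_{22}^TU_{22}=A_{22}$ collapses to $U_{22}^TU_{22}=O_{n-r}$, i.e.\ $U_{22}\in\mathcal{C}_{n-r}$. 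Conversely, for each $U_{22}\in\mathcal{C}_{n-r}$, reassembling the blocks $W$, $(W^T)^{-1}A_{12}$, $0$, $U_{22}$ yields an upper-triangular $U$ that one checks directly satisfies $U^TU=A$. Thus $U\mapsto U_{22}$ is a bijection from the Cholesky roots of $A$ onto $\mathcal{C}_{n-r}$, so the number of Cholesky factorizations of $A$ equals $|\mathcal{C}_{n-r}|$, the number of Cholesky roots of $O_{n-r}$ (as stated in the introduction). Moreover, since $W$ is invertible, column reduction gives $\rank(U)=r+\rank(U_{22})$, so the bijection restricts further to one between the Cholesky roots of $A$ of rank $r+s$ and $\mathcal{C}_{n-r}(s)$.

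I expect the only real work to be the middle paragraph — specifically, making sure $U_{12}$ is genuinely pinned down and that $U_{12}^TU_{12}$ equals $A_{22}$ exactly. This is precisely where the two hypotheses are used: the rank hypothesis (via the vanishing Schur complement) makes the $(2,2)$ equation collapse to $U_{22}^TU_{22}=O_{n-r}$, and the LPN hypothesis (via the uniqueness theorem applied to $A_{11}$, which first requires the small observation that $A_{11}$ inherits LPN form) pins down $U_{11}$ and hence $U_{12}$. Everything else is routine block algebra in characteristic $2$.
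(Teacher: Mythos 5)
Your proposal is correct and follows essentially the same route as the paper's proof: the same $r+(n-r)$ block decomposition, the same appeal to the uniqueness of the Cholesky root of the invertible leading block $A_{11}$ (which inherits LPN form) to pin down $U_{11}$ and then $U_{12}=(U_{11}^T)^{-1}A_{12}$, and the same collapse of the $(2,2)$ block equation to $U_{22}^TU_{22}=O_{n-r}$. The one substantive difference is how the identity $U_{12}^TU_{12}=A_{22}$ is obtained: the paper compares a generic root $B$ against the instructional Cholesky root $V$, whose trailing block is zero, citing \cite{cooper2019uniquely} for its existence, whereas you derive the same identity directly from the vanishing of the Schur complement $A_{22}+A_{12}^TA_{11}^{-1}A_{12}$ (forced by $\operatorname{rank}(A)=r$ and the invertibility of $A_{11}$). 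Your version is self-contained and arguably cleaner, since it does not import the existence of the instructional factorization; you also make explicit the converse direction (that every $U_{22}\in\mathcal{C}_{n-r}$ reassembles into a genuine root) and the rank refinement, both of which the paper leaves implicit.

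One caveat worth flagging: what your argument establishes --- and what the paper's own proof establishes --- is that the number of Cholesky roots of $A$ equals $|\mathcal{C}_{n-r}|$, the count promised in the introduction, not the literal $|\mathcal{C}_n(n-r)|$ printed in the corollary. These quantities differ in general (for instance $|\mathcal{C}_3(1)|=5$ while $|\mathcal{C}_2|=2$), so the displayed statement appears to be a misprint for $|\mathcal{C}_{n-r}|$ rather than a defect in your proof.
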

\begin{proof}
Let $A_{1,1}$ be the principal $r\times r$ submatrix of $A$ and suppose $B^TB=A$ is a Cholesky factorization of $A$.  Then
$$
A= B^TB=\left[ \begin{array}{c|c}
B_{1,1}^T & O  \\ \hline
B_{1,2}^T & B_{2,2}^T  \\
\end{array} \right]\left[ \begin{array}{c|c}
B_{1,1} & B_{1,2}  \\ \hline
O & B_{2,2}  \\
\end{array} \right]=\left[ \begin{array}{c|c}
B_{1,1}^TB_{1,1} & B_{1,1}^TB_{1,2}  \\ \hline
B_{1,2}^TB_{1,1} & B_{1,2}^TB_{1,2}+B_{2,2}^TB_{2,2}  \\
\end{array} \right]
$$
where $B_{1,1}$ is an $r\times r$ matrix.
However, \cite{cooper2019uniquely} demonstrated that $A$ has an (instructional) Cholesky  factorization of the form
$$
A = V^TV=\left[ \begin{array}{c|c}
V_{1,1}^T & O  \\ \hline
V_{1,2}^T & O  \\
\end{array} \right]\left[ \begin{array}{c|c}
V_{1,1} & V_{1,2}  \\ \hline
O & O  \\
\end{array} \right]=\left[ \begin{array}{c|c}
V_{1,1}^TV_{1,1} & V_{1,1}^TV_{1,2}  \\ \hline
V_{1,2}^TV_{1,1} & V_{1,2}^TV_{1,2}  \\
\end{array} \right]
$$
Since $A_{1,1}$ is a full-rank matrix it has a unique Cholesky factorization over $\mathbb{F}_2$ (see proof in \cite{cooper2016successful}).  That is, $B_{1,1}=V_{1,1}$.  Then by invertibility we have $B_{1,2}=\left(B_{1,1}^T\right)^{-1}B_{1,1}^TB_{1,2}=\left(V_{1,1}^T\right)^{-1}B_{1,1}^TB_{1,2}=\left(V_{1,1}^T\right)^{-1}V_{1,1}^TV_{1,2}=V_{1,2}$ and hence 
$$
V_{1,2}^TV_{1,2}=B_{1,2}^TB_{1,2}+B_{2,2}^TB_{2,2}\quad\Rightarrow\quad B_{2,2}^TB_{2,2}=O.
$$
\end{proof}
\section{Asymptotics}
In this section we derive an asymptotic estimate for the quantity described in Theorem \ref{Main Theorem}, in order to better understand how often a matrix has the stated properties as $n \rightarrow \infty$.  As such, we normalize the count by dividing it by $2^{n^2}$, which is  the number of $n\times n$ matrices over $\mathbb{F}_2$.

Recall, from Theorem \ref{Main Theorem}
\begin{eqnarray}
| \mathcal{B}_{2n} | &=&
\sum\limits_{j}\left[ \binom{2n}{n-3j}-\binom{2n}{n-3j-1}\right]2^{n^2-3j^2-j}\nonumber \\
| \mathcal{B}_{2n+1} | &=&\sum\limits_{j}\left[ \binom{2n+1}{n-3j}-\binom{2n+1}{n-3j-1}\right]2^{n^2+n-3j^2-2j}\nonumber
\end{eqnarray}
Consider the case of $2n$.  Then we can write%\hays{should the second series (below) be $|j|\geq \sqrt{n}$?}
\begin{align*}
2^{-n^2} | \mathcal{B}_{2n} | &= \sum_{|j| < \sqrt{n}}\left[ \binom{2n}{n-3j}-\binom{2n}{n-3j-1}\right]2^{-3j^2-j} \\
&\qquad +  \sum_{|j| \geq \sqrt{n}}\left[ \binom{2n}{n-3j}-\binom{2n}{n-3j-1}\right]2^{-3j^2-j}.
\end{align*}

Note only $j \in [-n/3,n/3]$ yield non-zero summands, and each summand in the second sum is bounded by  $4^n 2^{-3(j+1/6)^2+1/12} \leq 4^n 2^{-3n+\sqrt{n}}$, so this portion of the total is bounded by $n 2^{-n+\sqrt{n}} = o(1)$.  We write
\begin{align*}
\binom{2n}{n-3j}-\binom{2n}{n-3j-1} &= \frac{(2n)!}{(n-3j)!(n+3j)!} - \frac{(2n)!}{(n-3j-1)!(n+3j+1)!} \\
&= \binom{2n}{n - 3j} \left (1 - \frac{n-3j}{n+3j+1} \right )\\
&= \binom{2n}{n - 3j} \left (\frac{6j+1}{n+3j+1} \right ).
\end{align*}
Next, we use the approximation
\begin{equation} \label{eq1}
    2^{-N} \binom{N}{K} = (1+o(1)) \sqrt{\frac{2}{\pi N}} \exp \left ( -\frac{2 (K-N/2)^2}{N} \right ) 
\end{equation}
which, when $|j| < \sqrt{n}$, yields
\begin{align*}
\binom{2n}{n-3j}-\binom{2n}{n-3j-1} &= \frac{4^n (1+o(1))(6j+1)}{n^{3/2} \sqrt{\pi}} \exp \left ( -\frac{ 9j^2}{n} \right ) 
\end{align*}

Now, considering those $j$ close to $\sqrt{n}$, we have
\begin{align*}
\sum_{\sqrt[4]n < |j| < \sqrt{n}} \left [ \binom{2n}{n-3j}-\binom{2n}{n-3j-1} \right ] 2^{-3j^2-j} &\leq \sum_{\sqrt[4]n < |j| < \sqrt{n}} \frac{4^n (1+o(1))(6j+1)}{n^{3/2} \sqrt{\pi}} e^{-9} 2^{-\Omega(\sqrt{n})} \\
&= O(\sqrt{n}) \frac{4^n O(\sqrt{n})}{n^{3/2}} 2^{-\Omega(\sqrt{n})} \\
&= \frac{4^n}{\Omega(n^{1/2})} 2^{-\Omega(\sqrt{n})} = o(4^n/n^{3/2}).
\end{align*}
Finally,
\begin{align*}
2^{-n^2} | \mathcal{B}_{2n} | &= o \left ( \frac{4^n}{n^{3/2}} \right ) + \sum_{|j| < \sqrt[4]{n}} \frac{4^n (1+o(1))(6j+1)}{n^{3/2} \sqrt{\pi}} \exp \left ( - \frac{9j^2}{n} \right ) 2^{-3j^2-j} \\
&= \frac{4^n}{n^{3/2}\sqrt{\pi}} \left ( \sum_{|j| < \sqrt[4]{n}} (6j+1) 2^{-3j^2-j} + o(1) \right ) = \frac{2^{2n}}{(2n)^{3/2}} (\alpha + o(1)) 
\end{align*}
where $\alpha = 0.28332924469\ldots$.  

Consider the case of $2n+1$.  Then we can write
\begin{align*}
2^{-n^2-n} | \mathcal{B}_{2n+1} | &= \sum_{|j| < \sqrt{n}}\left[ \binom{2n+1}{n-3j}-\binom{2n+1}{n-3j-1}\right]2^{-3j^2-2j} \\
& \qquad +  \sum_{|j| \geq \sqrt{n}}\left[ \binom{2n+1}{n-3j}-\binom{2n+1}{n-3j-1}\right]2^{-3j^2-2j}.
\end{align*}

Note only $j \in [-(n+1)/3,(n+1)/3]$ yield non-zero summands, and each summand in the second sum is bounded by $4^n 2^{-3(j+1/3)^2+1/3} \leq 4^n 2^{-3n+2\sqrt{n}}$, so this portion of the total is bounded by $n 2^{-n+2\sqrt{n}} = o(1)$.  Next, we write
\begin{align*}
\binom{2n+1}{n-3j}-\binom{2n+1}{n-3j-1} &= \frac{(2n+1)!}{(n-3j)!(n+3j+1)!} - \frac{(2n+1)!}{(n-3j-1)!(n+3j+2)!} \\
&= \binom{2n+1}{n - 3j} \left (1 - \frac{n-3j}{n+3j+2} \right )\\
&= \binom{2n+1}{n - 3j} \left (\frac{6j+2}{n+3j+2} \right ).
\end{align*}
Once again applying (\ref{eq1}) yields
\begin{align*}
\binom{2n+1}{n-3j}-\binom{2n+1}{n-3j-1} &= \frac{2 \cdot 4^n (1+o(1))(6j+2)}{n^{3/2} \sqrt{\pi}} \exp \left ( \frac{ -9j^2+3j}{n+1/2} \right ) 
\end{align*}
Now, considering those $j$ close to $\sqrt{n}$, we have 
\begin{align*}
\sum_{\sqrt[4]n < |j| < \sqrt{n}} \left [ \binom{2n+1}{n-3j}-\binom{2n+1}{n-3j-1} \right ] 2^{-3j^2-2j} &\leq \sum_{\sqrt[4]n < |j| < \sqrt{n}} \frac{2 \cdot 4^n (1+o(1))(6j+2)}{n^{3/2} \sqrt{\pi}} e^{-9+o(1)} 2^{-\Omega(\sqrt{n})} \\
&= O(\sqrt{n}) \frac{4^n O(\sqrt{n})}{n^{3/2}} 2^{-\Omega(\sqrt{n})} \\
&= \frac{4^n}{\Omega(n^{1/2})} 2^{-\Omega(\sqrt{n})} = o(4^n/n^{3/2}).
\end{align*}
Finally,
\begin{align*}
2^{-n^2-n} | \mathcal{B}_{2n+1} | &= o \left ( \frac{4^n}{n^{3/2}} \right ) + \sum_{|j| < \sqrt[4]{n}} \frac{2 \cdot 4^n (1+o(1))(6j+2)}{n^{3/2} \sqrt{\pi}} \exp \left ( - \frac{9j^2+3j}{n+1/2} \right ) 2^{-3j^2-2j} \\
&= \frac{2 \sqrt{2} \cdot 2^{2n+1}}{(2n+1)^{3/2}\sqrt{\pi}} \left ( \sum_{|j| < \sqrt[4]{n}} (6j+2) 2^{-3j^2-2j} + o(1) \right ) = \frac{2^{2n+1}}{(2n+1)^{3/2}} (\beta + o(1)) 
\end{align*}
where $\beta = 0.336936271\ldots$.

Thus, we have the following.

\begin{theorem}
For $n \geq 1$,
$$
| \mathcal{B}_n | = (1+o(1)) \frac{2^{n^2/4+n}}{n^{3/2}} \left \{ 
\begin{array}{ll}
\alpha & \textrm{if $n$ even} \\
\beta' & \textrm{if $n$ odd},
\end{array}
\right .
$$
where 
$$
\beta' = \frac{2^{5/4}}{\sqrt{\pi}} \sum_{j \in \mathbb{Z}} (6j+2)2^{-3j^2-2j}
$$
and
$$
\alpha = \frac{2^{3/2}}{\sqrt{\pi}} \sum_{j \in \mathbb{Z}} (6j+1) 2^{-3j^2-j} .
$$
\end{theorem}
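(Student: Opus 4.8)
The plan is to read off the two parity cases from Theorem~\ref{Main Theorem}, estimate each, and then unify the two estimates by a change of variables; the estimation itself is essentially carried out in the displays above, so the real content of the proof is assembling those computations and doing the bookkeeping. First I would apply Theorem~\ref{Main Theorem} and normalize: divide $|\mathcal{B}_{2n}|$ by $2^{n^2}$ and $|\mathcal{B}_{2n+1}|$ by $2^{n^2+n}$, so each side becomes a sum over $j$ of a binomial difference times a weight $2^{-3j^2-j}$ (respectively $2^{-3j^2-2j}$). Then I would split each sum at $|j| = \sqrt{n}$: on $|j| \ge \sqrt{n}$ only $O(n)$ values of $j$ contribute, each binomial difference is at most $4^n$, and completing the square (using $-3j^2-j = -3(j+\tfrac16)^2+\tfrac1{12}$ and $-3j^2-2j = -3(j+\tfrac13)^2+\tfrac13$) bounds the weight by $2^{-3n+O(\sqrt{n})}$, so that tail contributes $o(1)$ after normalization.

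For the bulk $|j| < \sqrt{n}$ I would use the factorization
\[
\binom{2n}{n-3j}-\binom{2n}{n-3j-1} = \binom{2n}{n-3j}\cdot\frac{6j+1}{n+3j+1}
\]
(and its odd analogue with $6j+2$ and $n+3j+2$) together with the local limit approximation \eqref{eq1}. Since $|3j| = O(\sqrt{n})$ here, \eqref{eq1} holds with a $(1+o(1))$ factor uniform in $j$, turning the binomial difference into $(1+o(1))\tfrac{4^n(6j+1)}{n^{3/2}\sqrt{\pi}}\exp(-9j^2/n)$ (and, in the odd case, an analogous expression with $6j+2$ in place of $6j+1$ and an extra overall factor of $2$). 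I would then split the bulk once more at $|j| = \sqrt[4]{n}$: for $\sqrt[4]{n} \le |j| < \sqrt{n}$ the weight $2^{-3j^2} \le 2^{-3\sqrt{n}}$ overwhelms both the polynomial prefactor and the $O(n)$ terms, contributing $o(4^n/n^{3/2})$; for $|j| < \sqrt[4]{n}$ one has $\exp(-9j^2/n) = 1+O(n^{-1/2})$ uniformly, so the inner sum equals $\tfrac{4^n}{n^{3/2}\sqrt{\pi}}\bigl(\sum_{|j|<\sqrt[4]{n}}(6j+1)2^{-3j^2-j}+o(1)\bigr)$; finally, since $\sum_{j\in\mathbb{Z}}(6j+1)2^{-3j^2-j}$ converges absolutely (the weight decays like $2^{-3j^2}$), its tail past $\sqrt[4]{n}$ is $o(1)$ and the truncation may be dropped.

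It then remains to return to the dimension variable. For even dimension $N=2n$ one has $n^2+2n = N^2/4+N$ and $n^{3/2} = 2^{-3/2}N^{3/2}$, which gives exactly $\alpha = \tfrac{2^{3/2}}{\sqrt{\pi}}\sum_j(6j+1)2^{-3j^2-j}$. For odd dimension $N=2n+1$ one has $n^2+n+(2n+1) = N^2/4+N-\tfrac14$ and $(2n+1)^{3/2} = N^{3/2}$; carrying the odd-case factor of $2$ through yields $\tfrac{2^{3/2}}{\sqrt{\pi}}\sum_j(6j+2)2^{-3j^2-2j}$ in place of $\alpha$, and absorbing the residual $2^{-1/4}$ into the constant produces $\beta' = \tfrac{2^{5/4}}{\sqrt{\pi}}\sum_j(6j+2)2^{-3j^2-2j}$. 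Combining the two cases gives the stated asymptotic.

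I expect the main obstacle to be the uniformity of \eqref{eq1} across the entire bulk $|j| < \sqrt{n}$: one must check, via a careful Stirling expansion, that the relative error in the Gaussian approximation of $\binom{2n}{n\pm 3j}$ stays $o(1)$ even when $|j|$ is as large as $\sqrt{n}$ — it does, since the error exponent there is $O(j^4/n^3)+O(j^2/n^2) = O(1/n)$ — and that this uniform $O(1/n)$ error still sums to $o(1)$ after multiplication by the linear factor $6j+1$, which it does because $\sum_j(6j+1)2^{-3j^2-j} = O(1)$. The remaining ingredients — the crude $4^n$ tail bound, the nested splitting at $\sqrt[4]{n}$, and tracking the powers of $2$ — are routine.
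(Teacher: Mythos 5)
Your proposal is correct and follows essentially the same route as the paper's own argument: the same normalization by $2^{n^2}$ (resp.\ $2^{n^2+n}$), the same two-stage truncation at $|j|=\sqrt{n}$ and $|j|=\sqrt[4]{n}$, the same rewriting of the binomial difference as $\binom{2n}{n-3j}\frac{6j+1}{n+3j+1}$ combined with the local limit approximation \eqref{eq1}, and the same final change of variables producing the constants $2^{3/2}/\sqrt{\pi}$ and $2^{5/4}/\sqrt{\pi}$. The bookkeeping in your final paragraph (in particular $n^2+3n+1=N^2/4+N-\tfrac14$ for $N=2n+1$) checks out, so no changes are needed.
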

Note that $|\mathcal{B}_n|$ is not only a vanishing fraction of the total number of $0-1$ matrices, but is in fact a vanishing proportion of the $2^{n^2(1/2+o(1))}$ upper-triangular $0-1$ matrices.  Strangely, the constants $\alpha, \beta' \approx 0.28$ agree to several decimal places, but are in fact are unequal\footnote{This has been verified rigorously by bounding the tails of these sums.}: $7\cdot 10^{-7} < |\beta'-\alpha| < 8 \cdot 10^{-7}$ .  We have no satisfactory explanation for this near-miss.

\section{Future Work}
We have seen that for the special case that a binary square matrix is in leading principal minors form, then the number of Cholesky factorizations it admits is given by $|\mathcal{C}_k|$  (here $k$ is the nullity of the matrix).  This count is incorrect, however, when the matrix is not in leading principal minors form.  For example $$\left[\begin{array}{cc}0&0\\0&1\\\end{array}\right]^T\left[\begin{array}{cc}0&0\\0&1\\\end{array}\right]=\left[\begin{array}{cc}0&0\\0&1\\\end{array}\right]=\left[\begin{array}{cc}0&1\\0&0\\\end{array}\right]^T\left[\begin{array}{cc}0&1\\0&0\\\end{array}\right]$$ but the number of Cholesky roots of the $1\times 1$ zero matrix (over $\mathbb{F}_2$) is $1$.  Thus, we ask: how many Cholesky factorizations do $n \times n$ matrices have over $\mathbb{F}_2$?  In particular, if the matrix is upper-triangular, can this count be expressed in terms of the quantities $|\mathcal{C}_k|$?

Another interesting question would be to investigate if there is a \emph{natural} mapping between $\mathcal{B}_n$ and $\mathcal{C}_n$.  In this paper we show that $|\mathcal{B}_n(r)|=|\mathcal{C}_n(r)|$ for all $0\leq r\leq n$ but we do not give an explicit bijection between $\mathcal{B}_n(r)$ and $\mathcal{C}_n(r)$.  

Finally, it is worth mentioning that the bijection between the three sets (the Cholesky roots of zero, the upper-triangular roots of zero, and the upper-triangular roots of the identity) may not extend to other finite fields (in part because $(X+I)^2=X^2+I$ is unique to fields of characteristic two).  It follows that to count the number of Cholesky roots of a zero matrix over other finite fields one would likely need different techniques than the ones employed above.
\newpage

\bibliographystyle{abbrv}
\bibliography{bibliography.bib}
\end{document}